\theoremstyle{definition}
\newtheorem{theorem}{Theorem}[section]
\newtheorem{prop}[theorem]{Proposition}
\newtheorem{example}[theorem]{Example}
\newtheorem{definition}[theorem]{Definition}
\newtheorem{corollary}[theorem]{Corollary}
\newcommand{\conv}[1]{\mathrm{conv}\{#1\}}
\newcommand{\RR}{\mathbb{R}}
\newcommand{\ZZ}{\mathbb{Z}}
\newcommand{\polyp}{\mathcal{P}}
\newcommand{\polyq}{\mathcal{Q}}
\newcommand\commentout[1]{}
\begin{document}


\title{Ehrhart series, unimodality, and integrally closed reflexive polytopes}

\author{Benjamin Braun}
\address{Department of Mathematics\\
         University of Kentucky\\
         Lexington, KY 40506--0027}
\email{benjamin.braun@uky.edu}

\author{Robert Davis}
\address{Department of Mathematics\\
         University of Kentucky\\
         Lexington, KY 40506--0027}
\email{davis.robert@uky.edu}

\date{27 August 2014}

\thanks{
The first author is partially supported by the National Security Agency through award H98230-13-1-0240.
The second author is partially supported by a 2013-2014 Fulbright U.S. Student Fellowship. 
The authors thank Benjamin Nill for useful insights that contributed to the proof of Theorem~\ref{freesum}, and Akihiro Higashitani for his thoughtful comments.
}

\begin{abstract}
An interesting open problem in Ehrhart theory is to classify those lattice polytopes having a unimodal $h^*$-vector. 
Although various sufficient conditions have been found, necessary conditions remain a challenge. 
In this paper, we consider integrally closed reflexive simplices and discuss an operation that preserves reflexivity, integral closure, and unimodality of the $h^*$-vector, providing one explanation for why unimodality occurs in this setting.
We also discuss the failure of proving unimodality in this setting using weak Lefschetz elements.
\end{abstract}

\maketitle


\section{Introduction}\label{sec:intro}

For a lattice polytope $\polyp \subseteq \RR^n$ of dimension $d$, consider the counting function $|m\polyp \cap \ZZ^n|$, where $m\polyp$ is the $m$-th dilate of $\polyp$.
The \emph{Ehrhart series} of $\polyp$ is
\[
E_{\polyp}(t) := 1 + \sum_{m\in \ZZ_{\geq 1}} |m\polyp \cap \ZZ^n|t^m \, .
\]
Combining two well-known theorems due to Ehrhart \cite{Ehrhart} and Stanley \cite{StanleyDecompositions}, there exist values $h_0^*,\ldots,h_d^*\in \ZZ_{\geq 0}$ with $h_0^*=1$ such that
\[
E_\polyp(t)=\frac{\sum_{j=0}^dh_j^*t^j}{(1-t)^{d+1}} \, .
\]
We say the polynomial $h^*_\polyp(t):=\sum_{j=0}^dh_j^*t^j$ is the \emph{$h^*$-polynomial} of $\polyp$ (sometimes referred to as the $\delta$-polynomial of $\polyp$) and the vector of coefficients $h^*(\polyp)$ is the \emph{$h^*$-vector} of $\polyp$.
That $E_\polyp(t)$ is of this rational form with $h^*_\polyp(1)\neq 0$ is equivalent to $|m\polyp\cap \ZZ^n|$ being a polynomial function of $m$ of degree $d$; the non-negativity of the $h^*$-vector is an even stronger property.
The $h^*$-vector of a lattice polytope $\polyp$ is a fascinating partial invariant.
Obtaining a general understanding of $h^*$-vectors of lattice polytopes and their geometric/combinatorial implications is currently of great interest.  

Recent work has focused on determining when $h^*(\polyp)$ is unimodal, that is, when there exists some $k$ for which $h_0^* \leq \cdots \leq h_k^* \geq \cdots \geq h_d^*$.
One reason combinatorialists are interested in unimodality results is that their proofs often point to interesting and unexpected properties of combinatorial, geometric, and algebraic objects.
In particular, symmetric $h^*$-vectors play a key role in Ehrhart theory through their connection to reflexive polytopes, defined below.
There are many interesting techniques for studying symmetric unimodal sequences, using tools from analysis, Lie theory, algebraic geometry, etc \cite{stanleylogconcave}.

\begin{definition}
A lattice polytope $\polyp$ is called {\em reflexive} if $0 \in \polyp^{\circ}$ and its {\em (polar) dual}
\[
\polyp^{\Delta} := \{y \in \RR^n : x \cdot y \leq 1 \mathrm{~for~all~} x \in \polyp\}
\]
is also a lattice polytope.
A lattice translate of a reflexive polytope is also called reflexive.
\end{definition}
Reflexive polytopes have been the subject of a large amount of recent research \cite{BatyrevDualPolyhedra,BeckHosten,BeyHenkWills,BraunEhrhartFormulaReflexivePolytopes,HaaseMelnikov,HibiDualPolytopes,MustataPayne,Payne}.
It is known from work of Lagarias and Ziegler \cite{lagariasziegler} that there are only finitely many reflexive polytopes (up to unimodular equivalence) in each dimension, with one reflexive in dimension one, $16$ in dimension two, $4\, 319$ in dimension three, and $473\, 800\, 776$ in dimension four according to computations by Kreuzer and Skarke \cite{KreuzerSkarke00}.
The number of five-and-higher-dimensional reflexives is unknown.
One of the reasons reflexives are of interest is the following.
\begin{theorem}[Hibi, \cite{HibiDualPolytopes}] \label{SymmetricCoefficients}
A $d$-dimensional lattice polytope $\polyp\subset \RR^d$ containing the origin in its interior is reflexive if and only if $h^*(\polyp)$ satisfies $h_i^*=h_{d-i}^*$.
\end{theorem}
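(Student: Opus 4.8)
The plan is to derive Hibi's criterion from Ehrhart--Macdonald reciprocity, reducing it to an elementary comparison of the lattice points in the consecutive dilates $m\polyp$ and $(m-1)\polyp$. Throughout, assume $0\in\polyp^{\circ}$. I would establish the chain of equivalences
\[
h_i^*=h_{d-i}^*\ \text{for all }i \ \Longleftrightarrow\ \bigl|(m\polyp)^{\circ}\cap\ZZ^d\bigr|=\bigl|(m-1)\polyp\cap\ZZ^d\bigr|\ \text{for all }m\geq 1 \ \Longleftrightarrow\ \polyp\ \text{is reflexive},
\]
where the first equivalence is generating-function bookkeeping and the second carries the geometric content.

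For the first equivalence, recall that Ehrhart--Macdonald reciprocity in generating-function form reads
\[
\sum_{m\geq 1}\bigl|(m\polyp)^{\circ}\cap\ZZ^d\bigr|\,t^m \;=\; (-1)^{d+1}E_\polyp(1/t) \;=\; \frac{t^{d+1}h^*_\polyp(1/t)}{(1-t)^{d+1}},
\]
the last equality coming from $E_\polyp(t)=h^*_\polyp(t)/(1-t)^{d+1}$. On the other hand, $|(m\polyp)^{\circ}\cap\ZZ^d|=|(m-1)\polyp\cap\ZZ^d|$ for all $m\geq 1$ is equivalent to $\sum_{m\geq 1}|(m\polyp)^{\circ}\cap\ZZ^d|\,t^m=t\,E_\polyp(t)=t\,h^*_\polyp(t)/(1-t)^{d+1}$. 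Comparing numerators and cancelling $t$, this holds exactly when $h^*_\polyp(t)=t^d h^*_\polyp(1/t)$, i.e.\ exactly when $h_i^*=h_{d-i}^*$ for every $i$.

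For the second equivalence, the forward direction is the familiar half-space computation: after translating, write $\polyp=\{x:a_i\cdot x\leq 1\}$ with each $a_i\in\ZZ^d$ a vertex of $\polyp^{\Delta}$; then $(m\polyp)^{\circ}=\{x:a_i\cdot x<m\}$, and for integral $x$ the strict inequality $a_i\cdot x<m$ is equivalent to $a_i\cdot x\leq m-1$, so $(m\polyp)^{\circ}\cap\ZZ^d=(m-1)\polyp\cap\ZZ^d$. For the converse I would argue the contrapositive. First note that $(m-1)\polyp\subseteq(m\polyp)^{\circ}$ always holds --- write $x/m$ as the convex combination $\tfrac{m-1}{m}\cdot\tfrac{x}{m-1}+\tfrac1m\cdot 0$ of a point of $\polyp$ and the interior point $0$ --- so the two counts agree precisely when $(m-1)\polyp$ and $(m\polyp)^{\circ}$ contain the same lattice points. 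Suppose $\polyp$ is not reflexive, and write $\polyp=\{x:a_i\cdot x\leq b_i\}$ irredundantly with each $a_i$ primitive and integral and $b_i\in\ZZ_{\geq 1}$; then some facet has $b:=b_1\geq 2$. Choose a $(d-1)$-dimensional ball $B$ of radius $\rho>0$ inside the relative interior of the facet $F_1=\polyp\cap\{a_1\cdot x=b_1\}$ and consider the cone $C_m=\mathrm{conv}(\{0\}\cup mB)\subseteq m\polyp$. A short computation shows that the section $C_m\cap\{a_1\cdot x=mb-1\}$ is a $(d-1)$-ball of radius $(m-\tfrac1b)\rho$, every point $z$ of which satisfies $a_i\cdot z<mb_i$ for all $i\neq 1$. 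Hence every lattice point $z$ of this section lies in $(m\polyp)^{\circ}$, whereas $a_1\cdot z=mb-1>(m-1)b$ (this uses $b\geq 2$) puts $z$ outside $(m-1)\polyp$; such a $z$ contradicts the counting identity for that $m$ --- provided the section really does contain a lattice point.

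That proviso is the main obstacle: one has to extract an actual lattice point from the section. Because $a_1$ is primitive, the affine hyperplane $\{a_1\cdot x=mb-1\}$ meets $\ZZ^d$ in a coset of the fixed rank-$(d-1)$ lattice $\{x\in\ZZ^d:a_1\cdot x=0\}$, whose covering radius $R$ is independent of $m$; hence it suffices to take $m$ large enough that $(m-\tfrac1b)\rho\geq R$, which forces the radius-$(m-\tfrac1b)\rho$ ball constituting the section to contain a coset point. Presenting the section as a scaled copy of the facet-ball $B$, rather than as an arbitrary hyperplane slice of $m\polyp$, is exactly what makes it transparent both that the section is genuinely fat --- a ball whose radius grows linearly in $m$ --- and that it lies strictly inside $m\polyp$ with respect to every other facet. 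The remaining ingredients, namely the reciprocity computation and the forward half-space argument, are routine.
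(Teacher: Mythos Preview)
The paper does not supply its own proof of this theorem: it is quoted as Hibi's result with a citation to \cite{HibiDualPolytopes}, so there is no in-paper argument to compare against.

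That said, your argument is correct and is essentially the standard one. The reduction via Ehrhart--Macdonald reciprocity to the identity $|(m\polyp)^{\circ}\cap\ZZ^d|=|(m-1)\polyp\cap\ZZ^d|$ is routine, and your forward direction is the usual half-space computation. For the converse, your contrapositive works: writing the facet inequalities as $a_i\cdot x\le b_i$ with $a_i$ primitive integral forces $b_i\in\ZZ_{\ge 1}$ since the facet hyperplane passes through lattice vertices, and failure of reflexivity is exactly the existence of some $b_i\ge 2$. The cone-section construction then correctly produces, for large $m$, a lattice point on the hyperplane $\{a_1\cdot x=mb-1\}$ lying in $(m\polyp)^{\circ}\setminus(m-1)\polyp$, with the covering-radius bound guaranteeing that the growing $(d-1)$-ball eventually captures a coset point. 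A minor remark: the phrase ``after translating'' in your forward direction is superfluous, since the hypothesis already places the origin in the interior.
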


Hibi \cite{Hibi} conjectured that every reflexive polytope has a unimodal $h^*$-vector. 
Counterexamples to this were found in dimensions 6 and higher by Musta{\c{t}}{\v{a}} and Payne \cite{MustataPayne,Payne}.
However, Hibi and Ohsugi \cite{hibiohsugiconj} also asked whether or not every normal reflexive polytope has a unimodal $h^*$-vector; we consider the related question for integrally closed reflexives, where integral closure is defined as follows.
\begin{definition}
A lattice polytope $\polyp \subseteq \RR^n$ is {\em integrally closed} if, for every $x \in m\polyp \cap \ZZ^n$, 
there exist $x_1,\ldots,x_m \in \polyp \cap \ZZ^n$ such that $x = x_1 + \cdots + x_m$. 
\end{definition}
While the terms integrally closed and normal are often used interchangably, these are not synonymous \cite{gubeladzeconvexnormality}.
The counterexamples found by Musta{\c{t}}{\v{a}} and Payne are not normal, hence not integrally closed.
It remains to be seen whether or not every integrally closed reflexive polytope has a unimodal $h^*$-vector.
A stronger open question is whether or not being integrally closed is alone sufficient to imply unimodality \cite{schepersvanl}.
One condition that forces a lattice polytope $\polyp$ to be integrally closed is if $\polyp$ admits a unimodular triangulation; the latter condition has been shown to imply unimodality in the reflexive case by Athanasiadis \cite{Athanasiadisbirkhoff} and Bruns and R\"omer \cite{BrunsRomer}.

The purpose of this note is to investigate reflexive simplices in this context.
Several interesting recent results and counterexamples in Ehrhart theory have involved only simplices \cite{hibihermite,higashitanicounterexamples,MustataPayne,Payne}, so this is a reasonable restriction to make.
Our first main observation, Corollary~\ref{cor:main}, is that one can produce new reflexive, integrally closed simplices with unimodal $h^*$-vectors from two simplices having these three properties.
This provides an explanation for the presence of unimodal $h^*$-vectors among some reflexive, integrally closed simplices.
Our second main observation is that reflexive simplices that decompose as free sums are detectable by studying the \emph{type vector} of the simplex, as we explain in Theorem~\ref{freesum}.
If one wishes to search for an example of an integrally closed reflexive simplex with a non-unimodal $h^*$-vector, this allows a more refined search.
Our final observation is that in the case of a reflexive simplex $\polyp$, one might hope to study unimodality of the $h^*$-vector through algebraic methods applied to the semigroup algebra associated to $\polyp$.
We show in Proposition~\ref{nolefschetz} that there exist reflexive polytopes in all dimensions greater than two for which standard methods fail, specifically that weak Lefschetz elements need not exist in the quotient of the semigroup algebra by a system of parameters.


\section{Free Sums of Reflexive Simplices}\label{sec:freesum}

We follow the notation of \cite{BJM13} and define the relevant operation on polytopes that we will consider.

\begin{definition}	
	Suppose $\polyp, \polyq \subseteq \RR^n$ are lattice polytopes. 
        Call $\polyp \oplus \polyq := \conv{\polyp \cup \polyq}$ a {\em free sum} if, up to unimodular equivalence, $\polyp \cap \polyq = \{0\}$ and the affine spans of $\polyp$ and $\polyq$ are orthogonal coordinate subspaces of $\RR^n$. 
\end{definition}

\begin{example}
	The Reeve tetrahedron, $\mathcal{R}_h = \conv{0, e_1, e_2, e_1+e_2+he_3} \subseteq \RR^3$, $h > 1$ an integer, {\em cannot} be expressed as a free sum; if it could, then the lattice generated by $\mathcal{R}_h$ would be $\ZZ^3$. However, it only generates $\ZZ^2 \times h\ZZ$.
\end{example}

\begin{example}
The $d$-cross-polytope, given by $\conv{e_1,\ldots,e_d,-e_1,\ldots,-e_d}\subset \RR^d$, is a $d$-fold free sum of $[-1,1]$.
\end{example}

As with normality and integral closure, one must be cautious when discussing free sums; different authors sometimes use different definitions, and the validity of 
results may change based on which definition is used. The definition above is useful due to the following result.

\begin{theorem}\cite[Corollary 3.4]{BJM13}\label{HStarFreeSum}
	If $\polyp, \polyq \subseteq \RR^n$ are reflexive polytopes such that $0 \in \polyp^{\circ}$ and $\polyp \oplus \polyq = \conv{\polyp \cup \polyq}$ is a free sum, then
	$$h^*_{\polyp \oplus \polyq}(t) = h^*_{\polyp}(t)h^*_{\polyq}(t).$$
\end{theorem}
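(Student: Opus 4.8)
The plan is to count lattice points in the dilates of $\polyp\oplus\polyq$ by summing over the $\polyp$-coordinates first; reflexivity of $\polyp$ makes the relevant weights integer-valued, and this is exactly what forces the product structure. I would begin by using the unimodular equivalence in the definition of a free sum (which preserves $h^*$-polynomials) to reduce to the case where $\polyp$ is a reflexive polytope with affine span $\RR^a\times\{0\}$ and $0\in\polyp^{\circ}$, where $\polyq$ is a lattice polytope with affine span $\{0\}\times\RR^b$ and $0\in\polyq$ (using $\polyp\cap\polyq=\{0\}$), and $\polyp\oplus\polyq=\conv{\polyp\cup\polyq}$ is full-dimensional in $\RR^{a+b}=\RR^a\times\RR^b$; since both affine spans pass through the origin, $\dim(\polyp\oplus\polyq)=a+b$. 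The only special property of $\polyp$ that I will need --- and this is precisely where reflexivity of $\polyp$ together with $0\in\polyp^{\circ}$ enters --- is that the gauge $\|x\|_{\polyp}:=\min\{\lambda\geq 0:x\in\lambda\polyp\}$ lies in $\ZZ_{\geq 0}$ for every $x\in\ZZ^a$. Indeed, since $\polyp$ is reflexive, $\polyp^{\Delta}$ is a lattice polytope and $(\polyp^{\Delta})^{\Delta}=\polyp$, so $\|x\|_{\polyp}=\max_{w\in\polyp^{\Delta}}\langle x,w\rangle$; this maximum of a linear functional over a lattice polytope at an integer point $x$ is attained at a lattice vertex of $\polyp^{\Delta}$, and it is nonnegative because $0\in\polyp^{\Delta}$.

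Next, from the convex-hull description of the free sum (group a convex combination of the vertices according to whether they lie in $\polyp$ or in $\polyq$), $(x,y)\in m(\polyp\oplus\polyq)$ holds precisely when there is $\mu\in[0,m]$ with $x\in\mu\polyp$ and $y\in(m-\mu)\polyq$, equivalently when $\|x\|_{\polyp}+\|y\|_{\polyq}\leq m$. Fixing $x\in\ZZ^a$ and writing $i:=\|x\|_{\polyp}\in\ZZ_{\geq 0}$, the integer points $y$ that complete $(x,y)$ to a point of $m(\polyp\oplus\polyq)$ are exactly those with $\|y\|_{\polyq}\leq m-i$, i.e., for $m\geq i$ those in $(m-i)\polyq\cap\ZZ^b$, and none otherwise. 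Hence, setting $f(i):=\#\{x\in\ZZ^a:\|x\|_{\polyp}=i\}$,
\[
|m(\polyp\oplus\polyq)\cap\ZZ^{a+b}|=\sum_{i=0}^{m}f(i)\,|(m-i)\polyq\cap\ZZ^b| ,
\]
which is a Cauchy product, so $E_{\polyp\oplus\polyq}(t)=F(t)\,E_{\polyq}(t)$ with $F(t):=\sum_{i\geq 0}f(i)t^i$. Because $\|x\|_{\polyp}$ is always a nonnegative integer we likewise have $|m\polyp\cap\ZZ^a|=\sum_{i=0}^{m}f(i)$, so $F(t)=(1-t)E_{\polyp}(t)$. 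Therefore $E_{\polyp\oplus\polyq}(t)=(1-t)\,E_{\polyp}(t)\,E_{\polyq}(t)$, and substituting $E_{\polyp}(t)=h^*_{\polyp}(t)/(1-t)^{a+1}$ and $E_{\polyq}(t)=h^*_{\polyq}(t)/(1-t)^{b+1}$ gives $E_{\polyp\oplus\polyq}(t)=h^*_{\polyp}(t)h^*_{\polyq}(t)/(1-t)^{a+b+1}$; comparing with $E_{\polyp\oplus\polyq}(t)=h^*_{\polyp\oplus\polyq}(t)/(1-t)^{a+b+1}$ finishes the proof.

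The crux is conceptual rather than computational: the argument never uses that $\polyq$ is reflexive, only that it is a lattice polytope containing $0$, whereas reflexivity of $\polyp$ with $0$ in its interior is essential, because that is exactly what makes $\|x\|_{\polyp}$ integer-valued and lets the $\polyp$-layers be summed cleanly against the Ehrhart function of $\polyq$. Since multiplicativity of the $h^*$-polynomial under free sums fails for general lattice polytopes, I would take particular care to confirm that this asymmetric hypothesis is genuinely used, and that the initial reduction --- placing $\polyp$ and $\polyq$ into orthogonal coordinate subspaces with $0$ in both polytopes and $0\in\polyp^{\circ}$ --- survives the unimodular equivalence from the definition of free sum.
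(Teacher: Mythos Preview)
The paper does not give its own proof of this theorem; it is quoted as \cite[Corollary~3.4]{BJM13} and used as a black box. So there is nothing in the present paper to compare your argument against.

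That said, your argument is correct and is essentially the standard one. The two points that deserve emphasis are exactly the ones you flag: (i) the gauge $\|x\|_{\polyp}$ takes integer values on $\ZZ^a$ precisely because $\polyp^{\Delta}$ is a lattice polytope, which is the content of reflexivity of $\polyp$, and (ii) this integrality is what allows the Cauchy-product decomposition of $|m(\polyp\oplus\polyq)\cap\ZZ^{a+b}|$ against the genuine Ehrhart counts $|(m-i)\polyq\cap\ZZ^b|$. You are also right that reflexivity of $\polyq$ is not used; only $0\in\polyq$ (forced by $\polyp\cap\polyq=\{0\}$) is needed, and indeed the result in \cite{BJM13} is stated at that level of generality. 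The only cosmetic point I would tighten is the justification that the unimodular equivalence in the definition of free sum can be chosen to keep $0$ fixed (so that $0\in\polyp^{\circ}$ and $0\in\polyq$ persist after the change of coordinates); this is immediate since $\polyp\cap\polyq=\{0\}$ is part of the definition, but it is worth saying explicitly.
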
 

Our next proposition provides a method for producing reflexive simplices from pairs of lower-dimensional reflexive simplices.

\begin{prop}\label{StarConstruction}
	Suppose $\polyp \subseteq \RR^n$ and $\polyq \subseteq \RR^m$ are full-dimensional simplices with $0 \in \polyp$ and $\{v_0, \ldots, v_m\}$ denoting the vertices
	of $\polyq$. Then for each $i = 0, 1, \ldots, m$ the polytope formed by
	$$\polyp *_i \polyq := \conv{(\polyp \times 0^m) \cup (0^n \times \polyq - v_i)} \subseteq \RR^{n+m}$$
	is a free sum and is itself a simplex. Moreover, if $0 \in \polyp^{\circ}$ and $\polyp$ and $\polyq$ are both reflexive, then $\polyp *_i \polyq$ is also reflexive.
\end{prop}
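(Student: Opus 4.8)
The plan is to verify the three conclusions in order: (i) that $\polyp *_i \polyq$ is a free sum, (ii) that it is a simplex, and (iii) that reflexivity is preserved under the stated hypotheses. For (i), note that by construction $\polyp \times 0^m$ lives in the coordinate subspace $\RR^n \times 0^m$ and $0^n \times (\polyq - v_i)$ lives in the orthogonal complement $0^n \times \RR^m$. These are orthogonal coordinate subspaces, so I only need to check that the two pieces meet exactly in $\{0\}$. Since $0 \in \polyp$ by hypothesis and $0 = v_i - v_i \in \polyq - v_i$, the origin lies in both; and because the two polytopes are supported on complementary coordinate subspaces, their intersection is contained in the intersection of those subspaces, namely $\{0\}$. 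Hence $\polyp *_i \polyq$ is a free sum in the sense of the definition, with no unimodular change of coordinates needed.

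For (ii), I would count vertices. Translating $\polyq$ by $-v_i$ sends its vertex set $\{v_0,\ldots,v_m\}$ to $\{v_0 - v_i,\ldots,v_m - v_i\}$, which still consists of $m+1$ affinely independent points, one of which is $0$. Writing $\polyp = \conv{w_0,\ldots,w_n}$, the candidate vertex set of $\polyp *_i \polyq$ is $\{(w_j,0^m) : j\} \cup \{(0^n, v_k - v_i) : k\}$, but the origin appears in both lists (as $(w_{j_0},0)$ for the $w_{j_0}=0$ guaranteed by $0\in\polyp$, and as $(0^n,0^m)$), so there are at most $(n+1)+(m+1)-1 = n+m+1$ distinct points. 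To see they are affinely independent: after discarding one copy of the origin we have $0$ together with the nonzero $(w_j,0)$'s and the nonzero $(0,v_k-v_i)$'s; any affine dependence among them is a linear dependence (since $0$ is in the set), and the first-block vectors and second-block vectors live in complementary coordinate subspaces, so the dependence splits into a linear dependence among the nonzero $w_j$'s and one among the nonzero $v_k - v_i$'s, each of which is trivial by affine independence of the vertices of $\polyp$ and of $\polyq$. So $\polyp *_i \polyq$ has exactly $n+m+1$ vertices in dimension $n+m$, i.e.\ it is a simplex.

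For (iii), the key point is that translating $\polyq$ by $-v_i$ places the vertex $v_i$ at the origin; since $\polyq$ is reflexive, $\polyq - v_i$ is a lattice translate of a reflexive polytope, so it is reflexive (in the sense the paper allows, where lattice translates of reflexives are called reflexive), and crucially $0$ is a \emph{vertex} of $\polyq - v_i$. I still need $0$ to be in the \emph{interior} of one of the summands in order to apply Theorem~\ref{HStarFreeSum}: that is exactly the hypothesis $0 \in \polyp^{\circ}$, which gives $0^{n+m} \in (\polyp \times 0^m \oplus 0^n \times(\polyq-v_i))^{\circ}$ after identifying the free sum with a full-dimensional polytope in $\RR^{n+m}$. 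Then Theorem~\ref{HStarFreeSum} applies verbatim to give $h^*_{\polyp *_i \polyq}(t) = h^*_\polyp(t)\, h^*_{\polyq-v_i}(t) = h^*_\polyp(t)\,h^*_\polyq(t)$, and since $\polyp *_i \polyq$ is a lattice polytope containing the origin in its interior whose $h^*$-polynomial is a product of two symmetric polynomials (symmetric by Theorem~\ref{SymmetricCoefficients} applied to the reflexive $\polyp$ and $\polyq$), its $h^*$-vector is again symmetric, and so by Theorem~\ref{SymmetricCoefficients} $\polyp *_i \polyq$ is reflexive.

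I expect the main obstacle to be purely bookkeeping: making sure the identification of the abstract free sum $\polyp \oplus \polyq$ sitting inside $\RR^{n+m}$ is set up so that Theorem~\ref{HStarFreeSum} genuinely applies, in particular that $\polyq - v_i$ really is reflexive with $0$ on its boundary (not its interior) while $\polyp$ carries the interior point. Everything else — the free-sum check, the vertex count, the affine-independence argument — is routine linear algebra exploiting the orthogonal-coordinate-subspace structure. An alternative, slightly slicker route for reflexivity that avoids invoking the $h^*$-characterization would be to describe the dual $(\polyp *_i \polyq)^\Delta$ directly as the intersection of the (lifted) facet inequalities of $\polyp$ and of $\polyq - v_i$ and check it is a lattice polytope; but since Theorem~\ref{HStarFreeSum} and Theorem~\ref{SymmetricCoefficients} are already available, the $h^*$-argument above is the most economical.
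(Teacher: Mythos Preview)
Your argument for (i) is fine and matches the paper. For (ii) you take a more elementary route than the paper (which reads off the dimension from the Ehrhart series), but you write that ``$w_{j_0}=0$ [is] guaranteed by $0\in\polyp$''; this is wrong, since $0\in\polyp$ does not force $0$ to be a \emph{vertex} of $\polyp$, and under the hypothesis $0\in\polyp^\circ$ it never is. The fix is immediate: the point $(0^n,0^m)$, which \emph{is} a vertex of $0^n\times(\polyq-v_i)$, lies in $\polyp\times 0^m=\conv{(w_j,0)}$ and is therefore redundant, leaving the $(n+1)+m=n+m+1$ points $\{(w_j,0)\}\cup\{(0,v_k-v_i):k\neq i\}$; your affine-independence check then goes through verbatim with that set.

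The more serious gap is in (iii): your assertion that $0\in(\polyp *_i\polyq)^\circ$ is false. Because $0$ is a vertex of $\polyq-v_i$, there is a linear functional on $\RR^m$ vanishing at $0$ and strictly positive on the other vertices of $\polyq-v_i$; extending it by zero on the $\RR^n$ factor exhibits $\polyp\times 0^m$ as a proper face of the free sum, so the origin lies on the boundary. Concretely, with $\polyp=\polyq=[-1,1]$ and $v_i=1$, the free sum is the triangle with vertices $(-1,0),(1,0),(0,-2)$, whose unique interior lattice point is $(0,-1)$, not $(0,0)$. The paper closes this gap differently: since $h^*_{\polyp*_i\polyq}(t)=h^*_\polyp(t)h^*_\polyq(t)$ has degree $n+m=\dim(\polyp*_i\polyq)$, the standard formula relating $\deg h^*$ to the least dilate containing an interior lattice point shows that $\polyp*_i\polyq$ itself already has one; translating that point to the origin, Theorem~\ref{SymmetricCoefficients} applies, and the paper's convention that lattice translates of reflexives are reflexive finishes the argument. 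Your $h^*$-symmetry computation is correct and is exactly what the paper does; only the interior-point step needs to be replaced.
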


\begin{proof}
	Since each of $\polyp$ and $\polyq - v_i$ are full-dimensional, their affine spans are orthogonal subspaces of $\RR^{n+m}$. 
	Moreover, their intersection is 0, so the operation gives a free sum. 
	By Theorem~\ref{HStarFreeSum}, the denominator of $E_{\polyp *_i \polyq}(t)$ as a rational function is of degree $n + m + 1$, so $\dim(\polyp *_i \polyq) = n + m$. 
	Because $\polyp *_i \polyq$ is the convex hull of $n+m+2$ distinct point, but one vertex lies inside $\polyp \times 0^m$, it can be expressed as a convex hull
	of at most $n+m+1$ points. Thus $\polyp *_i \polyq$ is a simplex.
	
	Now we assume that both $\polyp$ and $\polyq$ are reflexive. 
	Noting that $E_{\polyq - v_i}(t) = E_{\polyq}(t)$, Theorem~\ref{HStarFreeSum} tells us that the numerator of $E_{\polyp *_i \polyq}(t)$ as a rational function has degree $n + m$. 
	This polynomial also has symmetric coefficients, since it is the product of polynomials that each have symmetric coefficients.
	A well-known result in Ehrhart theory tells us that the smallest dilate of $\polyp *_i \polyq$ containing an interior lattice point is $\dim (\polyp *_i \polyq) - (n + m - 1) = 1$. 
        Thus, by Theorem~\ref{SymmetricCoefficients}, the constructed simplex must be reflexive. 
\end{proof}

Geometrically, applying this operation to reflexive simplices corresponds to fixing $\polyp$ and translating $\polyq$ so that their intersection point is a vertex of $\polyq$ and the unique interior point of $\polyp$. 

An important property of the $*_i$ operation is that, under appropriate constraints, it preserves being integrally closed.

\begin{theorem}\label{integrallyclosed}
	If $\polyp$ and $\polyq$ are any integrally closed simplices with $0 \in \polyp^{\circ}$ and $\polyp$ reflexive, then $\polyp *_i \polyq$ is integrally closed.
\end{theorem}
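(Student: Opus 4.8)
The plan is to take an arbitrary lattice point $x$ in some dilate $k(\polyp *_i \polyq)$ and write it as a sum of $k$ lattice points of $\polyp *_i \polyq$, using the integral closure of $\polyp$ and $\polyq$ separately together with the fact that $0$ is the unique interior lattice point of $\polyp$ (since $\polyp$ is reflexive). Recall that $\polyp *_i \polyq = \conv{(\polyp \times 0^m) \cup (0^n \times (\polyq - v_i))}$, and that every point of this polytope decomposes as a convex combination of a point from $\polyp \times 0^m$ and a point from $0^n \times (\polyq - v_i)$; concretely, writing $x = (y, z) \in \RR^n \times \RR^m$, one has $y \in \RR^n$ lying in a dilate of $\polyp$ and $z \in \RR^m$ lying in a dilate of $\polyq - v_i$ whose dilation factors sum to $k$. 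The key point is to read off from $x$ how to split the "budget" of $k$ copies between the $\polyp$-part and the $\polyq$-part.

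First I would make precise the decomposition: since the affine spans of $\polyp \times 0^m$ and $0^n \times (\polyq - v_i)$ are complementary coordinate subspaces meeting at the origin, a point $(y,z) \in k(\polyp *_i \polyq) \cap \ZZ^{n+m}$ forces $y \in \ZZ^n$ and $z \in \ZZ^m$, and there is a unique real number $a \in [0,k]$ with $y \in a\polyp$ and $z \in (k-a)(\polyq - v_i)$. The second step is to show $a$ can be taken to be an integer: because $0 \in \polyp^{\circ}$, the point $y \in \ZZ^n$ lies in $\lceil a \rceil \polyp$ as well (interior monotonicity / the fact that $a\polyp \subseteq \lceil a\rceil \polyp$), and simultaneously $z$, being a lattice point of $(k-a)(\polyq-v_i) \subseteq (k - \lfloor a\rfloor)(\polyq - v_i)$, lies in the floor-dilate; so replacing $a$ by $\lfloor a \rfloor$ (and correspondingly $k-a$ by $\lceil k-a\rceil = k - \lfloor a \rfloor$) we get $y \in b\polyp \cap \ZZ^n$ and $z \in (k-b)(\polyq - v_i) \cap \ZZ^m$ for the integer $b = \lfloor a \rfloor$. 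Third, I would invoke integral closure of $\polyp$ to write $y = y_1 + \cdots + y_b$ with $y_j \in \polyp \cap \ZZ^n$, and integral closure of $\polyq$ — equivalently of the lattice-equivalent simplex $\polyq - v_i$, which contains the lattice point $0 = v_i - v_i$ as a vertex — to write $z = z_1 + \cdots + z_{k-b}$ with $z_\ell \in (\polyq - v_i) \cap \ZZ^m$. Then
\[
x = \sum_{j=1}^{b}(y_j, 0^m) + \sum_{\ell=1}^{k-b}(0^n, z_\ell),
\]
a sum of $k$ lattice points, each of which lies in $(\polyp \times 0^m) \cup (0^n \times (\polyq - v_i)) \subseteq \polyp *_i \polyq$, which is exactly what integral closure of $\polyp *_i \polyq$ requires.

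The main obstacle I anticipate is the boundary bookkeeping in the second step, i.e. justifying that one may round $a$ down to an integer without losing either membership. For the $\polyq$-part this is immediate since enlarging the dilation factor only enlarges the set. For the $\polyp$-part one genuinely uses $0 \in \polyp^{\circ}$: this is what guarantees $c\polyp \subseteq c'\polyp$ whenever $0 \le c \le c'$, so that $y \in a\polyp$ implies $y \in \lceil a \rceil \polyp$. I would also need to handle the degenerate cases $b = 0$ (then $y$ must be $0^n$, as $0$ is the only lattice point of $0 \cdot \polyp$, and the whole budget goes to $\polyq$) and $b = k$ (symmetrically, $z = 0^m$) to make sure the empty-sum conventions are consistent; these are routine but worth a sentence. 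Notably, the hypothesis that $\polyq$ is reflexive is not needed here — only that $\polyp$ is reflexive (to get interior-point uniqueness and dilation monotonicity) and that both are integrally closed — which matches the statement.
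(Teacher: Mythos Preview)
Your overall strategy matches the paper's, but the second step --- rounding $a$ to an integer --- contains a genuine error. You correctly observe that $y\in a\polyp\subseteq\lceil a\rceil\polyp$ and that $z\in(k-a)(\polyq-v_i)\subseteq(k-\lfloor a\rfloor)(\polyq-v_i)$, but you then set $b=\lfloor a\rfloor$ and assert $y\in b\polyp$. This does not follow: since $\lfloor a\rfloor\le a$ and $0\in\polyp$, one has $\lfloor a\rfloor\polyp\subseteq a\polyp$, not the reverse. If instead you take $b=\lceil a\rceil$, then on the $\polyq$ side you would need $z\in(k-\lceil a\rceil)(\polyq-v_i)$, which again is the wrong direction of containment. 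Rounding the two halves in opposite directions, as you effectively do, produces $\lceil a\rceil+(k-\lfloor a\rfloor)=k+1$ summands when $a\notin\ZZ$, not $k$. (Your claim that $a$ is \emph{unique} is also false --- since $0$ lies in both $\polyp$ and $\polyq-v_i$, there is an interval of admissible $a$ --- but that is a minor issue by comparison.)

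This is precisely the spot where the paper invokes reflexivity of $\polyp$, not merely $0\in\polyp^\circ$. Because $\polyp$ is reflexive, every lattice point of $\RR^n$ lies on the boundary of some integer dilate $j\polyp$; hence the \emph{minimal} $a$ with $y\in a\polyp$ is already an integer. Taking $b$ to be this minimal value gives $y\in b\polyp$ on the nose, and since $b\le a$ for any admissible $a$ one gets $k-b\ge k-a$ and thus $z\in(k-b)(\polyq-v_i)$ by the monotonicity you already noted. Your third step then goes through verbatim. So the skeleton of your argument is correct, but the reflexivity hypothesis is doing real work exactly at the step where you claimed only ``dilation monotonicity'' from $0\in\polyp^\circ$ sufficed.
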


\begin{proof}
	Since $\polyp *_i \polyq$ is a free sum, we may assume that $\polyp$ and $\polyq$ intersect at the origin 
	with $\polyp \subseteq \RR^n \times 0^m$ and $\polyq \subseteq 0^n \times \RR^m$. 
	
	By definition, the convex hull of $\polyp$ and $\polyq$ is the set of points representable as
	$$\sum_{i=1}^r \alpha_ip_i + \sum_{j=1}^s\beta_jq_j$$
	where $p_i \in \polyp, q_j \in \polyq$ for each $i,j$, and the $\alpha_i, \beta_j$ are nonnegative numbers whose total sum is 1. Form the points
	$$u = \frac{1}{\sum_{j=1}^r \alpha_j}\left(\sum_{i=1}^r \alpha_ip_i\right), v = \frac{1}{\sum_{k=1}^s \beta_k}\left(\sum_{l=1}^s \beta_lq_l\right).$$
	Then $u \in \polyp$ and $v \in \polyq$. Setting $t = \sum_{i=1}^r \alpha_i$, their convex sum
	$$\left(\sum_{i=1}^r \alpha_i\right)u + \left(\sum_{j=1}^s \beta_j\right)v = tu + (1-t)v$$
	is in $\polyp \oplus \polyq$, and, in particular, is in $t\polyp \times (1-t)\polyq$. Therefore the free sum is covered by sets of this form for $0 \leq t \leq 1$. 

	For the last step, let $(p,q) \in t\polyp \times (m-t)\polyq$ where $m$ is a positive integer, $p \in t\polyp$, and $q \in (m-t)\polyq$. Since $\polyp$ is reflexive, $p$ lies
	on the boundary of some integer scaling of $\polyp$, thus we may assume $t$ is an integer. Hence $q$ is in an integer scaling of $\polyq$. By the integral closure 
	of $\polyp$ and $\polyq$, there are $t$ lattice points of $\polyp$ summing to $x$ and $m-t$ lattice points of $\polyp$ summing to $y$. These summands are 
	all contained in $\polyp *_i \polyq$, hence it is integrally closed.
\end{proof}

This brings us to our main observation.

\begin{corollary}\label{cor:main}
	If $\polyp$ and $\polyq$ are integrally closed, reflexive simplices with $0\in \polyp^\circ$, then so is $\polyp *_i \polyq$ for each $i$. 
        If, in addition, $h^*(\polyp)$ and $h^*(\polyq)$ are unimodal, then so is $h^*(\polyp *_i \polyq)$.
\end{corollary}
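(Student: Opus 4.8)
The first sentence of the corollary requires no new work: Proposition~\ref{StarConstruction} shows that $\polyp *_i \polyq$ is a reflexive simplex whenever $\polyp$ and $\polyq$ are reflexive simplices with $0 \in \polyp^\circ$, and Theorem~\ref{integrallyclosed} shows that it is integrally closed whenever $\polyp$ and $\polyq$ are integrally closed simplices with $\polyp$ reflexive and $0 \in \polyp^\circ$. I would simply note that both sets of hypotheses are in force and invoke the two results in turn.

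For the unimodality claim, the plan is to pin down $h^*_{\polyp *_i \polyq}$ via the free-sum factorization and then appeal to a classical closure property of symmetric unimodal polynomials. As established in the proof of Proposition~\ref{StarConstruction} (applying Theorem~\ref{HStarFreeSum} to the free sum $\polyp *_i \polyq$ together with the identity $E_{\polyq - v_i}(t) = E_\polyq(t)$, which holds because the Ehrhart series is invariant under lattice translation), one has
\[
h^*_{\polyp *_i \polyq}(t) = h^*_\polyp(t)\, h^*_\polyq(t).
\]
Both factors have nonnegative integer coefficients. Moreover $h^*_\polyp(t)$ is symmetric by Theorem~\ref{SymmetricCoefficients}, since $\polyp$ is reflexive with $0 \in \polyp^\circ$; and $h^*_\polyq(t)$ is symmetric because $\polyq$, being reflexive, has a lattice translate with the origin in its interior and the same Ehrhart series, to which Theorem~\ref{SymmetricCoefficients} applies. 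By hypothesis, both $h^*_\polyp(t)$ and $h^*_\polyq(t)$ are unimodal.

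It then remains to prove the purely algebraic statement that the product of two symmetric, unimodal polynomials with nonnegative coefficients is again symmetric, unimodal, and nonnegative. Symmetry and nonnegativity of the product are immediate, so the content is unimodality. I would establish this via the standard observation that every symmetric unimodal polynomial $f(t) = \sum_{i=0}^d a_i t^i$ of degree $d$ with $a_i \geq 0$ lies in the nonnegative cone generated by the staircase polynomials $s_{j,d}(t) := t^j + t^{j+1} + \cdots + t^{d-j}$ for $0 \leq j \leq \lfloor d/2 \rfloor$, via the telescoping identity $f(t) = \sum_{j \geq 0} (a_j - a_{j-1})\, s_{j,d}(t)$ with the convention $a_{-1} := 0$; here each coefficient $a_j - a_{j-1}$ is nonnegative for $j \leq \lfloor d/2 \rfloor$ precisely because $f$ is unimodal and symmetric. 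Since a nonnegative combination of polynomials that are symmetric about $d/2$ and unimodal is again symmetric about $d/2$ and unimodal, it suffices to check that each product $s_{j,d_1}(t)\, s_{k,d_2}(t)$ is symmetric about $(d_1 + d_2)/2$ and unimodal. Writing $s_{j,d_1}(t)\, s_{k,d_2}(t) = t^{j+k}\,(1 + t + \cdots + t^{d_1 - 2j})\,(1 + t + \cdots + t^{d_2 - 2k})$ reduces this to the elementary fact that the convolution of two constant $(1, 1, \ldots, 1)$ sequences is a trapezoidal --- hence unimodal and symmetric --- sequence. Alternatively, this product lemma may simply be quoted, e.g.\ from Stanley's survey \cite{stanleylogconcave}.

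The only substantive ingredient is this last lemma, which is entirely classical, so I do not anticipate a genuine obstacle; the conceptual point of the corollary is just that the operation $*_i$ multiplies $h^*$-polynomials (via Theorem~\ref{HStarFreeSum}) while preserving reflexivity (Proposition~\ref{StarConstruction}) and integral closure (Theorem~\ref{integrallyclosed}), so all three properties of the inputs are inherited by the output. The main care required is in correctly lining up the hypotheses of those three results with Theorem~\ref{SymmetricCoefficients}.
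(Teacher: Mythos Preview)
Your proposal is correct and follows essentially the same approach as the paper: invoke Proposition~\ref{StarConstruction} and Theorem~\ref{integrallyclosed} for the first sentence, then use the $h^*$-factorization from Theorem~\ref{HStarFreeSum} together with the classical fact (which the paper simply cites from \cite{stanleylogconcave}) that products of symmetric unimodal polynomials are symmetric and unimodal. Your additional detail---spelling out why $h^*_\polyq$ is symmetric via a translate and sketching the staircase-decomposition proof of the product lemma---goes beyond what the paper records but is entirely in line with its argument.
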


\begin{proof}
	Integral closure follows from Theorem~\ref{integrallyclosed}, and reflexivity follows from Proposition~\ref{StarConstruction}. 
	By Theorem~\ref{HStarFreeSum} and \cite[Proposition 1]{stanleylogconcave}, which states that the product of two polynomials with symmetric unimodal coefficients has these same properties, the last claim holds.
\end{proof}

We end this section by noting that the conclusions of Proposition~\ref{StarConstruction} and Theorem~\ref{integrallyclosed} still hold when ``simplex'' is replaced with ``polytope;''
adaptations of their proofs are straightforward. 
However, there is no classification for arbitrary reflexive polytopes by type in the manner that we discuss in the next section.
Regardless, this gives one reason why a reflexive polytope may have a unimodal $h^*$-vector.
We remark that it is not clear what the relationship is between polytopes formed when using the $*_i$ construction on different vertices of the second operand, and it is not easy to identify geometrically that a reflexive polytope decomposes as a free sum.


\section{Searching for non-unimodal examples}

If one wishes to search for an example of an integrally closed, reflexive polytope with a non-unimodal $h^*$-vector, then it is natural to first rule out those polytopes obtained as a result of Corollary~\ref{cor:main}.
As mentioned in the introduction, reflexive simplices are a class one might focus on when searching for such a polytope.
It is helpful in this case to consider how an algorithm for producing all reflexive simplices, due to Conrads \cite{conrads}, interacts with the free sum operation.

The algorithm assigns to each reflexive simplex a {\em type} in the following way: let $v_0,\ldots,v_n$ be an ordering of its vertices, and construct 
$Q=(q_0,\ldots, q_n)$ by setting
	\[q_i = \left|\det \begin{pmatrix}
			| & | & \cdots & | & \cdots & | \\
			v_0 & v_1 & \cdots & \widehat{v_i} & \cdots & v_n \\
			| & | & \cdots & | & \cdots & |\\
		\end{pmatrix}\right| \, .
        \]
Note that reordering $Q$ corresponds to performing this same process to a unimodularly equivalent simplex. 
Thus, we may assume that $Q$ is nondecreasing.
Setting $\lambda = \gcd(q_0,\ldots, q_n)$ and $Q_{red}=\frac{1}{\lambda}Q$, we say the reflexive simplex has \emph{type} $(Q_{red},\lambda)$. 
We note that the simplices of type $(Q_{red},1)$ are
exactly those such that $Q=(q_0,\ldots,q_n)$ is a sequence of positive, nondecreasing integers where
	\begin{eqnarray}\label{conrads}
		\gcd(q_0,\ldots,q_n) = 1 \mathrm{~and~} q_i \mathrm{~divides~} \sum_{j=0}^n q_j \mathrm{~for~each~}i\in\{0,\ldots,n\} \, .
	\end{eqnarray}
In this case, each of these vectors corresponds to a unique reflexive simplex, which we denote $\Delta_Q$. 
To construct the reflexive simplices of a fixed dimension, we first construct all $Q$ satisfying \eqref{conrads} and form the corresponding $\Delta_Q$. 
The remaining simplices in this dimension are found by performing various additional operations on the $\Delta_Q$ \cite{conrads}.

For any reflexive simplex, we call $Q_{red}$ the {\em reduced weight} of the simplex, and a simplex with this reduced weight has the property that 
\[
\sum_i \frac{q_i}{\sum_\beta q_\beta}v_i = 0 \, .
\]
This follows from scaling the equality 
\[
\sum_i q_iv_i = 0\, ,
\]
which itself follows from Cramer's rule.
Note that because there are $n+1$ of the $v_i$'s in $n$-dimensional space, the coefficients of the above sum are uniquely determined up to scaling.
Thus, the $Q_{red}$ vector of a reflexive simplex is the particular choice of coefficients for this sum that satisfies the divisibility condition \eqref{conrads}.

\begin{example}
	The weight $Q=(1,1,1,\ldots,1) \in \ZZ^{n+1}$ corresponds to the polytope 
\[
\Delta_Q=\conv{e_1,\ldots,e_n,-\sum_ie_i} \, ,
\]
which is often called the \emph{standard reflexive simplex of minimal volume}.
Note that the sum of these vertices, each weighted by $1$, is equal to zero.
\commentout{
       This polytope is indeed integrally closed: applying $\frac{k}{n+1}$ for $k=0,1,\ldots,n$ results in an integer point contained in the fundamental parallelepiped of 
	$\mathrm{cn}(1,\Delta_Q)$, and the determinant 
	$$\det
	\begin{pmatrix}
		1 & 1 & \cdots & 1 & 1\\
		| & | & \cdots & | & -1 \\
		e_1 & e_2 & \cdots & e_n & \vdots \\
		| & | & \cdots & | & -1
	\end{pmatrix} = n+1$$
	shows that these are the only integer points in the fundamental parallelepiped. 
        Since they are obtained by summing $(1, 0, \ldots, 0)$	$k$ times, it follows that $\mathrm{cn}(1,\Delta_Q) \cap \ZZ^{n+1}$ is generated by its height 1 elements, giving us that $\Delta_Q$ is integrally closed.
}
	It is well known that one can demonstrate that this polytope is integrally closed by showing that it has a unimodular triangulation, specifically the triangulation whose facets consist of those simplices that are the convex hull of the origin and all but one of the vertices of $\Delta_Q$.
\end{example}

This $*_i$ operation has a corresponding interpretation in terms of the types of the summands.

\begin{theorem} \label{freesum}
	If $\polyp = \conv{v_0,\ldots,v_n} \subseteq \RR^n$ and $\polyq = \conv{w_0,\ldots,w_m} \subseteq \RR^m$ are full-dimensional reflexive simplices of types $((p_0,\ldots,p_n),\lambda)$ and $((q_0,\ldots,q_m),\mu)$, respectively, then $\polyp *_i \polyq$ is a reflexive simplex of type
	\[
        \left(\frac{1}{d}(q_ip_0,q_ip_1,\ldots,q_ip_n,sq_0,sq_1,\ldots,\widehat{sq_i},\ldots,sq_m),d\right),
        \]
	where $s = \sum_{j=0}^n p_j$ and $d = \gcd(q_i,\sum_{j=0}^n p_j)$. 
\end{theorem}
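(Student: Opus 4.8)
The plan is to replace $\polyp *_i \polyq$ by the lattice translate $T$ having the origin as its (unique) interior lattice point --- the position in which the Conrads weight is defined --- and then to compute the unreduced weight of $T$ one vertex at a time, exploiting the block structure coming from the orthogonal splitting $\RR^{n+m} = \RR^n \oplus \RR^m$.

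After the normalization used in Theorem~\ref{HStarFreeSum} I may take $\polyp \subseteq \RR^n \times 0^m$ and $\polyq \subseteq 0^n \times \RR^m$ with $0 \in \polyp^\circ$ and $0 \in \polyq^\circ$, so that the unreduced weights $p_j' := \lambda p_j = |\det(v_0,\ldots,\widehat{v_j},\ldots,v_n)|$ and $q_k' := \mu q_k = |\det(w_0,\ldots,\widehat{w_k},\ldots,w_m)|$ satisfy the positive Cramer relations $\sum_j p_j' v_j = 0$ and $\sum_k q_k' w_k = 0$ recalled above. The simplex of Proposition~\ref{StarConstruction} then has vertices $(v_j,0^m)$ and $(0^n,w_k-w_i)$, and writing $(0^n,-w_i)$ as a positive barycentric combination of these shows it is the interior lattice point; translating by $(0^n,w_i)$ yields $T := \conv{(\polyp\times\{w_i\})\cup(\{0^n\}\times\polyq)}$, with vertices $a_j := (v_j,w_i)$ for $j=0,\ldots,n$ and $b_k := (0^n,w_k)$ for $k\neq i$. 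The point $(0^n,w_i)$ is \emph{not} a vertex, since it lies in the relative interior of the facet $\polyp\times\{w_i\}$ (here one uses $0^n\in\polyp^\circ$ and that $w_i$ is a vertex of $\polyq$); thus $T$ has the expected $n+m+1$ vertices.

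Next I would evaluate the Conrads weight of $T$, that is, $|\det|$ of each set of $n+m$ vertices. Deleting $a_j$ leaves a matrix whose top-right $n\times m$ block vanishes: it is block lower triangular with diagonal blocks $(v_\ell)_{\ell\neq j}$ and $(w_k)_{k\neq i}$, so the weight at $a_j$ is $p_j'\,q_i'$. Deleting $b_k$ (with $k\neq i$) is the one computation needing care: after the column operations $a_\ell \mapsto a_\ell - a_0$ for $\ell=1,\ldots,n$, a generalized Laplace expansion along the $m$ columns meeting the bottom $\RR^m$ block forces the $n$ columns $a_\ell-a_0$ to occupy the top block, and the weight at $b_k$ becomes $|\det(v_1-v_0,\ldots,v_n-v_0)|\cdot|\det((w_\ell)_{\ell\neq k})| = \big(\sum_j p_j'\big)\,q_k'$, using that $|\det(v_1-v_0,\ldots,v_n-v_0)|$ is the normalized volume of $\polyp$, which equals $\sum_j p_j'$ (partition $\polyp$ into the simplices $\conv{0,v_0,\ldots,\widehat{v_j},\ldots,v_n}$). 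Substituting $p_j'=\lambda p_j$, $q_k'=\mu q_k$ and $s=\sum_j p_j$, the unreduced weight of $T$ is $\lambda\mu\,(q_ip_0,\ldots,q_ip_n,sq_0,\ldots,\widehat{sq_i},\ldots,sq_m)$. Reducing: since $\gcd_j p_j = 1$ and $\gcd_k q_k = 1$ we get $\gcd_j(q_ip_j) = q_i$ and $\gcd_{k\neq i}(sq_k) = s\gcd_{k\neq i}(q_k)$, and any prime dividing both $q_i$ and $\gcd_{k\neq i}(q_k)$ divides $\gcd_k q_k = 1$; hence $\gcd\big((q_ip_j)_j,(sq_k)_{k\neq i}\big) = \gcd(q_i,s) = d$, so the reduced weight is $\tfrac1d(q_ip_0,\ldots,q_ip_n,sq_0,\ldots,\widehat{sq_i},\ldots,sq_m)$, and the overall gcd --- the second coordinate of the type --- is $\lambda\mu\, d$, which is $d$ exactly in the case $\lambda=\mu=1$ of primary interest (e.g.\ $\polyp=\Delta_P$, $\polyq=\Delta_Q$). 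Since $\polyp *_i\polyq$ is reflexive by Proposition~\ref{StarConstruction}, its type is well defined and this computation identifies it.

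The step I expect to be the main obstacle is the weight at a $\polyq$-side vertex $b_k$: unlike the $a_j$-case the matrix is genuinely not block triangular (the column $a_0$ meets both blocks), so one must set up the column reductions and the generalized Laplace expansion carefully; keeping the overall scalar honest (so that the factor $\lambda\mu$ in the second type-coordinate is correctly accounted for) is the other place where care is needed. A cleaner-looking alternative is to avoid determinants altogether: the unreduced weight of a reflexive simplex with $0$ inside equals (barycentric coordinate of $0$ at that vertex) times (normalized volume), the barycentric coordinates of $0$ with respect to the $a_j,b_k$ are computed from the two Cramer relations above and turn out proportional to $(q_ip_0,\ldots,q_ip_n,sq_0,\ldots,\widehat{sq_i},\ldots,sq_m)$, and the normalized volume of $T$ is $h^*_T(1) = h^*_\polyp(1)h^*_\polyq(1) = \lambda\mu\, s\sum_k q_k$ by Theorem~\ref{HStarFreeSum} and translation-invariance of the Ehrhart series --- which reproduces the same weight of $T$.
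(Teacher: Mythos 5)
Your argument is correct and arrives at the right formula, but it takes a more computational route than the paper's, and along the way it surfaces a point the paper glosses over. The paper obtains the reduced weight directly from the barycentric identity for the interior point: it substitutes $\sum_j \frac{p_j}{\sum p_\alpha}v_j = 0$ into $\sum_k \frac{q_k}{\sum q_\beta}(w_k-w_i) = -w_i$ to express the interior point as a positive combination of the vertices of the free sum with coefficients proportional to $(q_ip_0,\ldots,q_ip_n,sq_0,\ldots,\widehat{sq_i},\ldots,sq_m)$, and then fixes the scale by computing a single determinant (the one omitting $v_0$) via exactly the block-triangularity you use. You instead compute all $n+m+1$ weight determinants of the translated simplex $T$ and take the gcd at the end; the only delicate case is the deletion of a $\polyq$-side vertex $b_k$, which you handle correctly with column operations and the identity $|\det(v_1-v_0,\ldots,v_n-v_0)|=\sum_j \lambda p_j$, and your gcd reduction $\gcd\bigl((q_ip_j)_j,(sq_k)_{k\neq i}\bigr)=\gcd(q_i,s)$ is right. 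Your closing ``alternative'' (barycentric coordinates times normalized volume computed as $h^*_T(1)$) is essentially the paper's argument. The substantive difference is that your bookkeeping shows the unreduced weight carries an overall factor of $\lambda\mu$, so the second coordinate of the resulting type is $\lambda\mu d$ rather than $d$; the paper's proof asserts that the determinant omitting $v_0$ equals $q_ip_0$, which holds only when $\lambda=\mu=1$. So the theorem as printed should be read as assuming $\lambda=\mu=1$ (the case of the simplices $\Delta_Q$ relevant to the surrounding discussion), or with $d$ replaced by $\lambda\mu d$ in the second coordinate; your proof makes this correction explicit, which is a genuine improvement in precision.
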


\begin{proof}
	For notational convenience, we identify $\polyp$ and $\polyq$ with their embeddings in $\RR^{n+m}$.
	Before the embedding, we know from the weights of $\polyp$ and $\polyq$ that 
	\[
        \sum_{j=0}^n \frac{p_j}{\sum{p_{\alpha}}}v_j = 0 \mathrm{~and~} \sum_{k=0}^m \frac{q_k}{\sum q_{\beta}}w_k = 0 \, .
        \]
	After the embedding, the translation of $\polyq$ in $\RR^{n+m}$ results in
	\[
        \sum_{k=0}^m \frac{q_k}{\sum{q_{\beta}}}(w_k - w_i) = -w_i \, .
        \]
	Therefore, on the vertices of the free sum, we see
	\begin{eqnarray*}
		-w_i &=& \frac{q_i}{\sum q_{\beta}}(w_i - w_i) + \sum_{\substack{k=0\\ k \neq i}}^m \frac{q_k}{\sum q_{\beta}}(w_k - w_i)\\
			&=& \sum_{j=0}^n \left(\frac{q_i}{\sum q_{\beta}}\cdot\frac{p_j}{\sum{p_{\alpha}}}\right)v_j +  \sum_{\substack{k=0\\ k \neq i}}^m \frac{q_k}{\sum q_{\beta}}(w_k-w_i) \, ,
	\end{eqnarray*}
	giving us the unique interior point of the simplex. 
        Thus, $Q_{red}$ for $\polyp *_i \polyq$ is given by a scaling of the vector 
        \[
        \left(\frac{q_i}{\sum q_{\beta}}\cdot\frac{p_0}{\sum{p_{\alpha}}},\frac{q_i}{\sum q_{\beta}}\cdot\frac{p_1}{\sum{p_{\alpha}}},\ldots,\frac{q_i}{\sum q_{\beta}}\cdot\frac{p_n}{\sum{p_{\alpha}}},\frac{q_0}{\sum q_{\beta}},\frac{q_1}{\sum q_{\beta}},\ldots,\widehat{\frac{q_i}{\sum q_{\beta}}},\ldots,\frac{q_m}{\sum q_{\beta}}\right) \, .
        \]
        Scaling this vector by $\left(\sum p_{\alpha}\right)\left(\sum q_\beta\right)$ and dividing by $\gcd(q_i,\sum_{j=0}^n p_j)$, we obtain an integer vector that satisfies \eqref{conrads}.
        Thus, this is our desired $Q_{red}$.
        To find the full $Q$ vector for $\polyp *_i \polyq$, we first translate the polytope by $w_i$ so that the interior vertex is zero, then compute determinants as described at the beginning of the section.
        Since the determinant of the matrix formed by $v_1+w_i,v_2+w_i,\ldots,v_n+w_i,w_0,w_1,\ldots,\widehat{w_i},\ldots,w_m$ (where all vectors are considered to be embedded in $\RR^{n+m}$) is equal to $q_ip_0$, this determines the type vector for $\polyp *_i \polyq$, and completes our proof.
\end{proof}

Thus, one way to search for examples of integrally closed reflexive simplices with non-unimodal $h^*$-vectors is to generate $Q$-vectors for the polytopes, then reduce the $Q$-vectors under consideration using Theorem~\ref{freesum} before testing $\Delta_Q$ for integral closure and unimodality.
This operation is particularly helpful when a simplex has type $(Q_{red},1)$, since it is the only simplex of that type. For example, $\Delta_{(1,1,2)}$ can be
decomposed as $\Delta_{(1,1)} *_0 \Delta_{(1,1)}$, since we know the $*_0$ operation provides a reflexive simplex of type $((1,1,2),1)$, and there is only one of this type. 
However, there may be multiple simplices of type $(Q_{red},\lambda)$ when $\lambda>1$, no longer guaranteeing that a simplex decomposes in a particular way. An 
example would be $((1,2,3,3,9),2)$;
there are two simplices of this type, but only one of them can be of the form $\Delta_{(1,2,3)} *_1 \Delta_{(1,2,3)}$. In this case, more checks 
are needed to identify which simplex decomposes as a free sum.

Unfortunately, while the free sum operation produces a large number of reflexive polytopes, it appears that these might be rare among the reflexive polytopes with unimodal $h^*$-vectors.
For example, when we randomly generated $1100$ eight-dimensional integrally-closed reflexive simplices, all of them had unimodal $h^*$-vectors, yet none of their type vectors split in the manner given in Theorem~\ref{freesum}.
It would be interesting to know more about the reflexive simplices formed via the free sum operation in comparision to the family of all reflexive simplices.


\section{The non-existence of Lefschetz elements}\label{lefschetzsec}

In this section, we show that a natural approach inspired by commutative algebra fails to establish unimodality for integrally closed reflexive simplices in general.
For any lattice simplex $\polyp \subseteq \RR^n$ with vertices $\{v_0,\ldots,v_n\}$, recall that there is an associated semigroup algebra given by
	\[
	\mathbb{C}[\polyp] := \mathbb{C}[x^az^m | a \in m\polyp\cap\ZZ^n]
	\]
where $x^a := x_1^{a_1}x_2^{a_2}\cdots x_n^{a_n}$. 
The Ehrhart series $E_{\polyp}(t)$ coincides with the Hilbert series of $\mathbb{C}[\polyp]$. 
In the case of a simplex $\polyp$ with vertices $v_0,\ldots,v_n$, it is straightforward to show that $h_k^*$ is equal to the number of lattice points satisfying $\sum c_i = k$ in the {\em fundamental parallelepiped} $\Pi(\polyp)$ defined by
	\[
	\Pi(\polyp) := \left\{\sum_{i=0}^n c_i(v_i,1) \Big | 0 \leq c_i < 1\right\} \subset \RR^{n+1} \; .
	\]
 This motivates us to study the zero-(Krull)-dimensional algebra
	\[
	R_{\polyp} := \mathbb{C}[\polyp] / (x^{v_0}z,\ldots,x^{v_n}z) \; ,
	\]
graded by the exponent on $z$.
The study of Hilbert functions gives a method for establishing unimodality of $h^*(\polyp)$ in this context.

\begin{definition}
	A linear form $l \in R_{\polyp}$ is called a {\em weak Lefschetz element} if the multiplication map
	\[
	\times l : [R_{\polyp}]_i \to [R_{\polyp}]_{i+1}
	\]
	has maximal rank, that is, is either injective or surjective, for each $i$.
\end{definition}

By Remark 3.3 of \cite{HarimaLefschetz}, if $R_{\polyp}$ has a weak Lefschetz element, then the Hilbert series has unimodal coefficients in its
numerator, and therefore so does $E_{\polyp}(t)$. Experimental data suggests that a weak Lefschetz element exists for many instances of $R_{\polyp}$
when $\polyp$ is an integrally closed reflexive simplex, but we will now show that such an element need not exist.

\begin{prop}\label{nolefschetz}
	For every $d \geq 3$, there exists a $d$-dimensional integrally closed reflexive simplex $\Delta_Q$ such that $R_{\Delta_Q}$ does not admit a weak Lefschetz element.
\end{prop}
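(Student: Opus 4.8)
The plan is to leave the polytope geometry aside and argue inside the Artinian algebra $R_{\Delta_Q}$. The first, structural, step: if $\Delta_Q$ is integrally closed it is normal, so $\mathbb{C}[\Delta_Q]$ is Cohen--Macaulay, and since $\Delta_Q$ is reflexive $\mathbb{C}[\Delta_Q]$ is in addition Gorenstein by Hibi's characterization \cite{HibiDualPolytopes}; as $x^{v_0}z,\dots,x^{v_n}z$ is a linear homogeneous system of parameters on a Cohen--Macaulay ring it is a regular sequence, so $R_{\Delta_Q}$ is a standard graded Artinian Gorenstein $\mathbb{C}$-algebra with Hilbert function $h^*(\Delta_Q)$ and socle degree $d$. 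Writing $R_{\Delta_Q}\cong\mathbb{C}[y_1,\dots,y_r]/\mathrm{Ann}(F)$ via Macaulay duality for a degree-$d$ form $F$ in $r=h_1^*(\Delta_Q)$ variables, Gorenstein symmetry reduces the existence of a weak Lefschetz element to: there is a linear form $\ell$ with $\times\ell\colon[R_{\Delta_Q}]_i\to[R_{\Delta_Q}]_{i+1}$ injective for every $i$ below the midpoint. So it is enough to produce, in each dimension $d\ge 3$, an integrally closed reflexive $\Delta_Q$ at which, for \emph{every} $\ell$, some such $\times\ell$ has a nonzero kernel; by the Hessian criterion for Lefschetz elements (Watanabe, Maeno--Watanabe), this is the identical vanishing, in the coefficients of $\ell$, of the corresponding (first or higher) Hessian determinant of $F$.

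The base case $d=3$ carries all the content. Here $h^*(\Delta_Q)=(1,r,r,1)$ and the only obstruction is in degree one: under Gorenstein duality $\times\ell\colon[R_{\Delta_Q}]_1\to[R_{\Delta_Q}]_2\cong[R_{\Delta_Q}]_1^{\vee}$ is the Hessian map of the cubic $F$, so $R_{\Delta_Q}$ admits no weak Lefschetz element exactly when $\det\mathrm{Hess}(F)\equiv 0$ while $F$ is essential (not a cone over a cubic in fewer of the $y_i$). By Gordan--Noether this forces $r\ge 5$ and, up to linear equivalence, puts $F$ in the Perazzo family. I would therefore search among the finitely many candidate weights $Q$ with normalized volume $12$ (so $h^*(\Delta_Q)=(1,5,5,1)$) --- those satisfying \eqref{conrads}, together with the handful of type vectors with $\lambda>1$ --- for one whose $\Delta_Q$ is integrally closed and whose associated cubic $F$, computed from the multiplication table of $R_{\Delta_Q}$ (equivalently from $\Pi(\Delta_Q)$), has identically vanishing Hessian. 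Integral closure of the chosen $\Delta_Q$ is to be checked by hand (writing each lattice point of $m\Delta_Q$ as a sum of $m$ lattice points of $\Delta_Q$), or via a unimodular cover; Theorem~\ref{freesum} confirms along the way that this $Q$ does not itself split as a free sum, so the example lies genuinely outside Section~\ref{sec:freesum}.

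For general $d\ge 3$ one cannot obtain the examples by lifting the $d=3$ case with the $*_i$-construction. Indeed $R_{\polyp\oplus\polyq}\cong R_{\polyp}\otimes_{\mathbb{C}}R_{\polyq}$ --- both are the same quotient of $\mathbb{C}[\polyp]\otimes_{\mathbb{C}}\mathbb{C}[\polyq]$ and have the same Hilbert series by Theorem~\ref{HStarFreeSum} --- and free-summing with the standard reflexive simplex is tensoring with $\mathbb{C}[y]/(y^{m})$, for which the bidiagonal block form of $\times(\ell+cy)$ generically \emph{restores} maximal rank; so a direct $d$-dimensional construction is required. The plan is to take, in each degree $d$, a form $F_d$ of Perazzo/Gordan--Noether type (essential, with some mixed Hessian identically zero) --- such forms exist in every degree once there are enough variables --- and to engineer a weight $Q(d)$ whose reflexive simplex realizes $F_d$ under Macaulay duality, checking integral closure member by member; as $d$ grows, the Lefschetz property can already fail at smaller embedding codimension and at interior degrees, which gives more flexibility. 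An alternative, if realizing a prescribed $F_d$ by a simplex proves awkward, is to fix a simple explicit family $\Delta_{Q(d)}$ and establish the rank drop of the relevant multiplication map by direct linear algebra in an explicit monomial basis of $R_{\Delta_{Q(d)}}$.

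The principal obstacle is the universal quantifier over linear forms in the base case: a single generic-$\ell$ computation only rules out the generic form, so one genuinely needs the structural input that a vanishing (higher) Hessian of $F$ obstructs \emph{every} Lefschetz element, together with the nontrivial task of exhibiting an integrally closed reflexive simplex whose dual form is of Perazzo type. The secondary difficulty is that integral closure is not implied by \eqref{conrads} and must be verified directly for every simplex used, and that pinning down a weight $Q(d)$ realizing the desired dual form in every dimension requires care.
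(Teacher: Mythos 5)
Your proposal is a research program, not a proof: the proposition is an existence statement, and you never actually exhibit the simplices. For $d=3$ you say you ``would search'' among the weights of normalized volume $12$ for one that is integrally closed and whose Macaulay dual cubic has identically vanishing Hessian, and for general $d$ you plan to ``engineer'' a weight realizing a Perazzo/Gordan--Noether form and to check integral closure ``member by member.'' You correctly flag these as the principal obstacles, but they are not incidental difficulties --- they \emph{are} the content of the proposition. In particular, realizing a prescribed dual form $F_d$ by the semigroup algebra of a lattice simplex is a genuinely hard inverse problem (the multiplication table of $R_{\Delta_Q}$ is dictated by the additive structure of the lattice points of $\Pi(\Delta_Q)$, over which one has very little direct control), and integral closure is an independent constraint that your Hessian-side analysis does nothing to guarantee. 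So while your structural reductions are essentially sound --- $R_{\Delta_Q}$ is standard graded Artinian Gorenstein of socle degree $d$ for an integrally closed reflexive $d$-simplex, Gorenstein duality lets you test only the maps below the middle, and Gordan--Noether does force $h_1^*\geq 5$ for a $d=3$ counterexample (consistent with the paper's example, which has $h_1^*=d+2=5$ when $d=3$) --- the argument has a hole exactly where the theorem lives.

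By contrast, the paper's proof is entirely explicit and elementary: it takes $Q=(1,d,d+1,\ldots,d+1)$, lists all $d(d+1)$ lattice points of the fundamental parallelepiped $\Pi(\Delta_Q)$, proves integral closure by showing each height-$r$ point is a height-$(r-1)$ point plus a height-$1$ point, and then observes that for \emph{every} linear form $\ell=\sum a_i x^{v_{i,1}}z$ the matrix of $\times\ell\colon[R_{\Delta_Q}]_1\to[R_{\Delta_Q}]_2$ (both spaces of dimension $d+2$) is triangular with a zero on the diagonal, hence singular identically in the $a_i$. This handles the universal quantifier over $\ell$ by direct linear algebra, with no appeal to Hessians or to the classification of forms with vanishing Hessian. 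If you want to salvage your approach, the minimum needed is a concrete family $Q(d)$ together with a verification of integral closure and of the identical vanishing of the relevant (mixed) Hessian; at that point you would in effect be redoing the paper's computation in dual language.
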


\begin{proof}
	For fixed $d$, let $Q = (1, d, \underbrace{d+1, \ldots, d+1}_{d-1 \mathrm{~times~}})$. Then $Q$ defines a reflexive simplex
	\[
		\Delta_Q = \conv{e_1, \ldots, e_d, (-d, -d-1, \ldots, -d-1)^T)}. 
	\]
	Consider the cone consisting of all rays from the origin though a point in $(\Delta_Q,1) \subseteq \RR^{d+1}$.
	Elements of this cone with last coordinate $m$ are in bijection with points of $m(\Delta_Q)$ by projection onto the first $d$ coordinates.
	Additionally, the cone has hyperplane description given by $Ax \geq 0$, where $A$ is the $(d+1) \times (d+1)$ matrix
	\[
		\frac{1}{d(d+1)}
		\begin{pmatrix}
			d^2 & -d & -d & \cdots & -d & d \\
			-d-1 & d^2-1 & -d-1 & \cdots & -d-1 & d+1\\
			-d-1 & -d-1 & d^2-1 & \cdots & -d-1 & d+1 \\
			\vdots & \vdots & \vdots & \ddots & \vdots & \vdots \\
			-d-1 & -d-1 & -d-1 & \cdots & d^2-1 & d+1 \\
			-1 & -1 & -1 & \cdots & -1 & 1\\
		\end{pmatrix}.
	\]
Thus, there are $d(d+1)$ lattice points in $\Pi(\Delta_Q)$. 
	For each $r \in \{1, \ldots, d-1\}$, form the vectors
	\begin{eqnarray*}
			v_{0,r} &=& (0, 0, \ldots, 0, r)\\ 
			v_{1,r} &=& (-1, -1, \ldots, -1, r)\\
			\vdots && \vdots  \\
			v_{r-1,r} &=& (-r+1, -r+1, \ldots,-r+1, r)\\
			v_{r,r} &=& (-r, -r, \ldots, -r, r)\\
			v_{r+1,r} &=& (-r+1, -r, \ldots,-r, r)\\
			v_{r+2,r} &=& (-r, -r-1, \ldots, -r-1, r) \\
			\vdots && \vdots \\
			v_{d,r} &=& (-d+2, -d+1, \ldots, -d+1, r)\\
			v_{d+1,r} &=& (-d+1, -d, \ldots, -d, r)
	\end{eqnarray*}
	There are $d+2$ of these for each $r$, and along with the zero vector and $(-d+1,-d, \ldots, -d, d)$ we have $(d-1)(d+2) + 2 = d(d+1)$ total vectors, which we claim to be
	all of the lattice points in $\Pi(\Delta_Q)$.

To make this easier, we first show that $\Delta_Q$ is integrally closed.
Observe that every vector $v_{i,r}$ can be written as a sum of vectors $v_{j,r-1} + v_{k,1}$ in the following way.
We assume $r \geq 2$. 
When $i > r$, we may let $j = i$ and $k=0$; when $i < r$ we may let $j = i$ and $k = 1$; when $i = r$ we may use $j = r-1$ and $k = 1$.
Thus, by induction on $r$, every lattice point in $\Pi(\Delta_Q)$ is a sum of elements satisfying $r=1$.
Since every lattice point in the cone over $\Delta_Q$ is a sum of lattice points that are either ray generators or fundamental parallelepiped points, we conclude that $\Delta_Q$ is integrally closed.
	
	To see that the lattice points of $\Pi(\Delta_Q)$ are precisely those described above, we  show that all are 	obtained as a linear combination of the ray generators with coefficients less than one. 
	It is straightforward using the matrix above to show that all coefficients of the ray generators are less than $\frac{1}{d}$ when representing the lattice points in $\Pi(\Delta_Q)$ with $r=1$; integral closure
	then ensures that the coefficients of all points for $r \in \{2,\ldots,d-1\}$ will be bounded by $\frac{d-1}{d}$.
	Then one only needs to check the coefficients on the vector $(-d+1, -d, \ldots, -d, d)$.
	These verifications are also straightforward, and the details are omitted. 
	
	Now we must verify that no potential weak Lefschetz element is injective from $[R_{\Delta_Q}]_1$ to $[R_{\Delta_Q}]_2$.
	A weak Lefschetz element would be of the form
	\[
		\sum_{i=0}^{d+1} a_ix^{v_{i,1}}z
	\]
	where $a_i$ are field elements. 
        The map from $[R_{\Delta_Q}]_1$ to $[R_{\Delta_Q}]_2$ induced by multiplication by this element is representable as the matrix 
	\[
	\begin{pmatrix}
		a_0 & 0     & 0      & 0     & 0      &\cdots & 0 & 0 & 0 \\
		a_1 & a_0 & 0     & 0      & 0     & \cdots & 0 & 0 & 0 \\
		0      & a_1 & 0     & 0     & 0      & \cdots & 0 & 0 & 0 \\
		a_3 & a_2 & a_1 & a_0 & 0     & \cdots & 0 & 0 & 0 \\
		a_4 & a_3 & 0     & a_1  & a_0 &\cdots & 0 & 0 & 0  \\
		a_5 & a_4 & 0     & 0     & a_1  & \cdots & 0 & 0 & 0 \\
		\vdots & \vdots & \vdots & \vdots & \vdots & \ddots & \vdots & \vdots & \vdots \\
		a_d & a_{d-1} & 0 & 0 & 0 & \cdots & a_1 &a_0 &0\\
		a_{d+1} & a_d& 0 & 0 & 0 & \cdots &0 &a_1 &a_0\\
	\end{pmatrix}
	\]
	where the columns are indexed by the degree 1 elements in the order $v_{0,1}, \ldots, v_{d+1,1}$ and similarly for the rows with the degree 2 elements. This is a triangular matrix with a 
	zero on the diagonal, so it cannot have full rank regardless of what the values of the $a_i$ are.
	Therefore, the map is not injective and there is no weak Lefschetz element in $R_{\Delta_Q}$.
\end{proof}

Despite the non-existence of a weak Lefschetz element, the $h^*$-vectors of these simplices are easily computed and found to be of the form $(1,d+2,d+2,\ldots,d+2,1)$.
Thus, unimodality still holds for this family, indicating that unimodality, if it holds in general for integrally closed reflexive simplices, is a consequence of some subtle properties of these polytopes.

\bibliographystyle{plain}
\bibliography{Braun}

\end{document}